\newcommand{\bigbinom}[2]{\left(\!\!\begin{array}{c}#1\\#2\end{array}\!\!\right)}
\newcommand{\bb}{\begin{equation}}
\newcommand{\ee}{\end{equation}}
 \newtheorem{thm}{Theorem}
 \newtheorem{lem}[thm]{Lemma}
\newcommand{\QED} {\hfill$\square$}
\title{Exactly Colored Complete Subgraphs of Infinite Graphs}
\date{(December 2025)}
\author{\v{Z}arko Ran\dj elovi\'c}
\affil{\small{ Mathematical Institute of the Serbian Academy of Sciences and Arts
\\
Kneza Mihaila 36\\
Belgrade 11000, Serbia}}
\affil{\texttt{zarko.randjelovic@turing.mi.sanu.ac.rs}}
\begin{document}
\maketitle

\begin{abstract}
    Given integers $m\le c$ and an exact $c$-coloring of the edges of a complete countably infinite graph (i.e. a coloring that uses exactly $c$ colors), must there be an infinite subgraph that is exactly $m$-colored? Using the Infinite Ramsey Theorem It is easy to show that the statement is true if $m=1,2$ or $c$. Erickson conjectured that it is false in all other cases. Stacey and Weidl proved that for each $m\ge 3$ there is some large enough $C(m)$ such that the conjecture is true for all pairs $(c,m)$ with $c>C(m)$. The main aim of this paper is to show that for all large enough $m$ the conjecture holds for all $c>m$. This reduces the number of cases needed to fully verify the conjecture to a finite number.
\end{abstract}

\section{Introduction}
Suppose that $c\ge m\ge 1$ are integers and let $P(c,m)$ be the assertion that given any exact $c$-coloring of the edges of a complete countably infinite graph (that is, a coloring with $c$ colors all of which must be used at least once) there exists an exactly $m$-colored countably infinite complete subgraph. Erickson \cite{ericksonconjecture,ERICKSON1994395} conjectured that $P(c,m)$ is true if and only if $m=1,m=2$ or $m=c$. The case $m=c$ is trivial and $m=1$ is the  Infinite Ramsey Theorem. Erickson \cite{ERICKSON1994395} verified that $P(c,m)$ is also true for $m=2$ and also gave examples to show that $P(c,m)$ is false in many other cases.  Even though the pairs covered by these cases have density $1$ in all pairs there are still infinitely many pairs left. Stacey and Weidl \cite{STACEY19991} showed that for all $m$ there is some $C(m)$ such that $P(c,m)$ is false for all $c\ge C(m)$. They also point out that we can cover most cases of other pairs $(c,m)$ by considering modulo $6$. This however misses some classes modulo $6$, still leaving out an infinite set of cases when $c$ is not much larger than $m$.

The goal of this paper is to show that for large enough $m$, $P(c,m)$ is false for any $c>m$. This, combined with the result of Stacey and Weidl, will give only a finite number of cases left to verify the conjecture of Erickson. We first state a well known result in number theory that will be of use.

\begin{thm}
    (Prime Number Theorem) For any real number $x>1$ define $\pi(x)$ to be the number of prime numbers less than or equal to $x$. We then have 
    $$\lim_{x\rightarrow \infty}\frac{\pi(x)}{\frac{x}{\log x}}=1$$
\end{thm}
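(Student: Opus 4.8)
The plan is to follow the classical analytic route, reducing the statement to the asymptotics of a Chebyshev-type counting function. First I would introduce the von Mangoldt function $\Lambda(n)$, equal to $\log p$ when $n=p^k$ is a prime power and $0$ otherwise, together with its summatory function $\psi(x)=\sum_{n\le x}\Lambda(n)$. A routine Abel summation shows that the assertion $\pi(x)\sim x/\log x$ is equivalent to $\psi(x)\sim x$, so it suffices to prove the latter. The bridge to complex analysis is the Riemann zeta function $\zeta(s)=\sum_{n\ge 1}n^{-s}$, which for $\Re(s)>1$ also admits the Euler product $\zeta(s)=\prod_p(1-p^{-s})^{-1}$; taking the logarithmic derivative gives $-\zeta'(s)/\zeta(s)=\sum_{n\ge 1}\Lambda(n)n^{-s}$ on that half-plane.

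Next I would record the analytic facts about $\zeta$ that are needed: it admits a meromorphic continuation to $\Re(s)>0$ with a single simple pole at $s=1$ of residue $1$, and --- this is the crux --- it has no zeros on the line $\Re(s)=1$. The non-vanishing follows from the elementary inequality $3+4\cos\theta+\cos 2\theta\ge 0$: applied to $\log\bigl|\zeta(\sigma)^3\zeta(\sigma+it)^4\zeta(\sigma+2it)\bigr|$ as $\sigma\downarrow 1$, a hypothetical zero at $1+it$ would force this expression to $-\infty$, contradicting the simple pole at $s=1$.

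With the non-vanishing in hand, I would apply a Tauberian theorem --- the Wiener--Ikehara theorem, or equivalently Newman's short contour-integral argument --- to the Dirichlet series $-\zeta'(s)/\zeta(s)-1/(s-1)$, which by the above is analytic on an open set containing $\Re(s)\ge 1$. This produces $\psi(x)\sim x$, and hence the theorem. An alternative, purely real-variable path is the Erd\H{o}s--Selberg elementary proof, which starts from Selberg's symmetry formula $\sum_{n\le x}\Lambda(n)\log n+\sum_{mn\le x}\Lambda(m)\Lambda(n)=2x\log x+O(x)$ and extracts $\psi(x)\sim x$ via a combinatorial iteration.

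The main obstacle is, in both approaches, essentially the same: the non-vanishing of $\zeta$ on the line $\Re(s)=1$ in the analytic proof, or the delicate bootstrapping from the symmetry formula in the elementary one; everything else is bookkeeping. As the Prime Number Theorem is entirely standard, in the body of the paper we simply invoke it as a black box.
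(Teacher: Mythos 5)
Your outline is an accurate sketch of the standard analytic proof: the reduction of $\pi(x)\sim x/\log x$ to $\psi(x)\sim x$ by partial summation, the identity $-\zeta'(s)/\zeta(s)=\sum_{n\ge1}\Lambda(n)n^{-s}$, the non-vanishing of $\zeta$ on $\Re(s)=1$ via the inequality $3+4\cos\theta+\cos2\theta\ge0$, and the Tauberian step (Wiener--Ikehara or Newman). The paper itself offers no proof at all --- the Prime Number Theorem is stated purely as a classical result to be quoted, exactly as you do in your final sentence --- so there is nothing in the paper to compare your argument against. The only caveat is that what you have written is a roadmap rather than a proof: the Tauberian theorem (or Newman's contour argument) and the meromorphic continuation of $\zeta$ are themselves nontrivial inputs that you treat as black boxes, so if a self-contained proof were actually required, those steps would still need to be carried out. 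For the purposes of this paper, invoking the theorem as standard, as both you and the author do, is the right call.
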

Our problem can be reduced to constructing finite graphs. It is not hard to show that if we can construct a finite (not necessarily complete) graph whose edges are colored with exactly $c$ colors that has no induced subgraph whose edges use exactly $m$ colors then both $P(c+1,m+1)$ and $P(c+2,m+2)$ are false. We will split the problem into two cases. First the case when $c<\frac{m\sqrt{\log m}}{100}$ and then the case when $c\ge \frac{m\sqrt{\log m}}{100}$. The second case will be done using the same idea as in \cite{STACEY19991} with slight improvements on the bounds so that $c$ can reach an order as low as $m\sqrt{\log m}$. The first case will be done with a construction using the prime number theorem. We will state the two cases as theorems.
\begin{thm}\label{lowc}
    For large enough $m$ if $c< \frac{m(\log m)^{1/4}}{2}$ then there is a finite graph $G$ whose edges are exactly $c$-colored that has no exactly $m$-colored subgraph.
\end{thm}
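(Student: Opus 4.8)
I would realise $G$ as a recoloured complete bipartite graph. If the $st$ edges of $K_{s,t}$ all receive distinct colours, then an induced subgraph obtained by keeping $i$ of the left vertices and $j$ of the right vertices uses exactly $ij$ colours, and every colour that appears is of this form; hence $K_{s,t}$ has no exactly $m$-coloured induced subgraph precisely when $m\notin\{ij:0\le i\le s,\ 0\le j\le t\}$, i.e.\ when $m$ has no divisor in the window $[\,m/t,\ s\,]$. The role of the Prime Number Theorem will be to choose the parameters so that this window is divisor-free for $m$ while the total number of colours is exactly $c$.

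Concretely: first record the reduction already quoted in the introduction, so that it suffices to produce a finite edge-coloured graph with exactly $c$ colours and no exactly $m$-coloured induced subgraph. Then pick a prime $p$ of the appropriate size — roughly $p\approx\sqrt{cm}\,/\,\sqrt c=\sqrt m$, and more flexibly any prime in a suitable short interval, whose existence (indeed, abundance) is exactly what the Prime Number Theorem supplies — set $t=\lfloor c/p\rfloor$ so that $pt\le c<p(t+1)$, and take $G$ to be $K_{p,t}$ with all edges distinctly coloured together with $\delta:=c-pt\ (<p)$ extra colours adjoined in a controlled way (for instance one extra right vertex joined to $\delta$ of the left vertices by $\delta$ fresh colours). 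The colour count is then exactly $c$, and a short computation identifies the set of colour-counts of induced subgraphs as $\bigcup_{i\le p,\ j\le t}\bigl[\,ij,\ ij+\min(i,\delta)\,\bigr]$. So $m$ is avoided provided (i) $m$ has no divisor in the relevant window near $p$ — here one uses that a prime $p$ not dividing $m$ is itself not a divisor, so the window can be slid into the divisor-gap just below $p$; and (ii) the extra shift of at most $\delta$ does not land on $m$, which is arranged by keeping $\delta$ small, i.e.\ by choosing $p$ so that one of $c,c-1,c-2$ is nearly a multiple of $p$.

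The main obstacle is the quantitative step behind (i): for a given large $m$ one must exhibit a prime $p$ of the right size such that $m$ has no divisor in an interval whose endpoints have ratio $c/m$. The hypothesis $c<\tfrac12 m(\log m)^{1/4}$ keeps this ratio below $\tfrac12(\log m)^{1/4}$, and a counting argument does the rest: the number of integers in the relevant range that actually divide $m$ is $m^{o(1)}$, which is dwarfed by the number of primes there (about $\sqrt m/\log m$ by the Prime Number Theorem), so an admissible prime exists; the $(\log m)^{1/4}$ slack is precisely what reconciles the two estimates uniformly in $m$, and it is also what allows $\delta$ to be pushed small. I expect the delicate bookkeeping to be making (i) and (ii) hold \emph{simultaneously} with the total pinned to $c$. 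The one regime this does not reach is $c$ so close to $m$ that no prime lies in the needed interval; there I would fall back on a long rainbow cycle $C_c$, whose induced subgraphs realise exactly $\{0,1,\dots,c-2\}\cup\{c\}$ and so handle $m=c-1$, together with small variants of this gadget for the next few values of $c-m$.
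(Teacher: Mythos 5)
Your reduction of the rainbow bipartite graph to a divisor condition is correct as far as it goes, but the key quantitative step fails. For $K_{p,t}$ with all $pt\approx c$ edges distinctly coloured, avoiding an exactly $m$-coloured induced subgraph means $m$ has no divisor in the multiplicative window $[\,m/t,\ p\,]$, whose endpoint ratio is $pt/m\approx c/m$, and the theorem must cover every $c$ up to $\tfrac12 m(\log m)^{1/4}$, i.e.\ ratios as large as $\tfrac12(\log m)^{1/4}$. Such a divisor-free window need not exist at all: if $m=2^k$ (or more generally $m$ has many small prime factors), consecutive divisors of $m$ are within ratio $2$ of each other throughout $[1,m]$, so \emph{every} window of ratio at least $2$ contains a divisor of $m$, and no choice of $p$ can help once $c>2m$. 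Your counting argument ("$m^{o(1)}$ divisors versus $\sqrt m/\log m$ primes") compares the wrong quantities — the issue is not whether a prime exists near $\sqrt m$, but whether the divisors of $m$ leave a multiplicative gap of ratio $c/m$, and they may not. The $\delta$-gadget makes matters worse rather than better: it turns the attainable colour counts into the unions of intervals $[ij,\ ij+\min(i,\delta)]$, enlarging the set of values you must avoid, and "choosing $p$ so that one of $c,c-1,c-2$ is nearly a multiple of $p$" cannot in general be reconciled with the (already unattainable) divisor-gap requirement.

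The paper's proof contains exactly the mechanism your sketch is missing. Besides the rainbow bipartite part between a small side $V$ of size $a=p_1^{\epsilon}$ (a prime power) and a large side $W$, it colours the complete graph on $W$ with a further $d=r+nx$ colours using a probabilistic lemma (Lemma \ref{colorfulcliques}) guaranteeing that every moderately large subset of $W$ already sees all $d$ of those colours. Any candidate exactly $m$-coloured subgraph is then forced to use all $d$ clique colours and hence exactly $m-d$ bipartite colours, so the divisor condition is imposed on $m-d$ rather than on $m$ itself; the Prime Number Theorem and a Chinese-Remainder choice of $r$ and $x$ make $m-d$ free of all prime factors below a threshold $t$ except possibly $p_1$, which is incompatible with $vw=m-d$, $v\le a$. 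In other words, the arithmetic obstruction is moved from the given integer $m$ (over which one has no control) to an adjustable integer $m-d$. Without some analogue of this shift — or another way to kill the dense-divisor case — your construction cannot prove the theorem in the stated range of $c$; the rainbow-cycle fallback only addresses $c-m$ very small, which is not where the difficulty lies.
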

\begin{thm}\label{largec}
    For large enough $m$ if $c\ge \frac{m(\log m)^{1/4}}{2}$ and there is a positive integer $k$ such that $\binom{k}{2}-m$ is a non-negative odd number less than $k-1$ then there is a finite graph $G$ whose edges are exactly $c$-colored that has no exactly $m$-colored subgraph.
\end{thm}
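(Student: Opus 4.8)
The plan is to give an explicit construction in the spirit of Stacey and Weidl, refined so that the threshold on $c$ can be pushed down to order $m(\log m)^{1/4}$. Write $r=\binom k2-m$; by hypothesis $r$ is odd with $1\le r\le k-2$, so $\binom{k-1}2<m<\binom k2$ and $k=\Theta(\sqrt m)$. Two facts drive everything. First, in a rainbow complete graph every clique on $j$ vertices uses the triangular number $\binom j2$ of colours, and $m$, lying strictly between the consecutive triangular numbers $\binom{k-1}2$ and $\binom k2$, is never of that form. Second — and this is where the parity of $r$ is used — if a colour class of the colouring is a triangle, then inside any vertex set it contains $0$, $1$ or $3$ of its edges, never exactly $2$, so it lowers the number of colours of every induced sub-clique by $0$ or by $2$. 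Hence if we start from a rainbow $K_n$ and repaint so that a number of colour classes become triangles (vertex- or edge-disjoint), the colour count of every induced subgraph on $j$ vertices stays $\equiv\binom j2\pmod2$; in particular a clique on exactly $k$ vertices has colour count $\equiv\binom k2\equiv m+1\not\equiv m$, so it is never $m$-coloured, however many of the repainted triangles it contains.

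For the construction, let $n$ be smallest with $\binom n2\ge c$ and, if necessary, enlarge $n$ by at most $2$ so that $D:=\binom n2-c$ becomes even (possible since among any few consecutive triangular numbers both parities occur). Repaint exactly $D/2$ colour classes of the rainbow $K_n$ into monochromatic triangles, so that $G$ uses exactly $c$ colours. When $D$ is small enough to fit a ``friendship'' configuration — all $D/2$ triangles sharing one common vertex, which needs $D+1$ vertices — this fits comfortably inside $K_n$; this happens automatically unless we had to enlarge $n$. Otherwise $D$ can be as large as $\approx 3\sqrt{2c}$, and one must instead pack $\approx\tfrac32\sqrt{2c}$ edge-disjoint monochromatic triangles into $K_n$: now they must share vertices, and the point is to pack them ``spread out'', so that no clique on $k+j$ vertices contains more than a prescribed few of them for the relevant values of $j$.

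It then remains to check that no induced subgraph (necessarily a sub-clique, as $G$ is complete) has exactly $m$ colours. For $j\le k-1$ the colour count is at most $\binom{k-1}2<m$; for $j=k$ it is $\equiv m+1\pmod2$, hence $\ne m$; and for $j\ge k+1$ it equals $\binom j2$ minus twice the number of repainted triangles lying inside the $j$-set. In the friendship case that deficiency is at most $j-1$, which for every $j\ge k+1$ is strictly smaller than $\binom j2-m\ \bigl(\ge\binom{k+1}2-m=r+k\bigr)$, so equality is impossible. In the spread case the same conclusion follows once $n=\Theta(\sqrt c)$ is large enough relative to $k=\Theta(\sqrt m)$ that the triangles can be packed sparsely enough that no $(k+j)$-clique reaches the forbidden deficiency $\binom{k+j}2-m$; this is exactly the comparison that forces the quantitative hypothesis $c\ge\frac{m(\log m)^{1/4}}2$.

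I expect the genuine difficulty to be this spread packing, i.e.\ producing, inside a complete graph on $\Theta(\sqrt c)$ vertices, a family of about $\tfrac32\sqrt{2c}$ edge-disjoint triangles meeting fewer than $\tfrac12\bigl(\binom{k+j}2-m\bigr)$ of them in every $(k+j)$-clique, and doing so with room to spare using only $c\ge\frac{m(\log m)^{1/4}}2$. Everything else — the parity bookkeeping that disposes of $j=k$ for free, the trivial estimate for $j<k$, and the magnitude estimate for large $j$ — is routine once the right objects are in place; reconciling ``remove enough colours to land exactly on $c$'' with ``remove them in a structured enough way to dodge $m$'', and pushing this down to order $m(\log m)^{1/4}$ rather than the far larger $C(m)$ of Stacey and Weidl, is the crux.
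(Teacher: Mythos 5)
Your overall strategy is the right one and matches the paper's in outline: use gadgets whose colour deficiency inside any induced subgraph is always $0$ or $2$, so that a $k$-set has colour count $\equiv\binom{k}{2}\not\equiv m\pmod 2$, sets of size $<k$ are too small, and a ``spread'' packing of the gadgets prevents sets of size $>k$ from dropping down to $m$. (One simplification you miss: by monotonicity of the colour count under taking induced subgraphs it suffices to control sets of size exactly $k+1$, which is all the paper's Lemma~\ref{prob4sets} does; you do not need a condition for every size $k+j$.) But there are two genuine gaps. First, you explicitly leave the packing lemma unproved, and that lemma is the entire content of the theorem: in the paper it is Lemma~\ref{prob4sets}, proved by a random choice of $4$-sets together with Chebyshev, Markov and Harris inequalities, followed by the check that the hypothesis $c\ge\frac{m(\log m)^{1/4}}{2}$ gives $n\ge \alpha(k+1)$ with an absolute constant $\alpha$ once $m$ is large (this is where the threshold on $c$ enters, via $n/k\gtrsim(\log m)^{1/8}$).

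Second, and more seriously, your choice of gadget --- monochromatic triangles --- makes that packing lemma unobtainable by the natural argument, and this is precisely why Stacey--Weidl and the paper use a different gadget. Quantitatively: if gadgets occupy $s$ vertices and one wants at most $\delta(k+1)$ of roughly $\Theta(n)$ random gadgets in every $(k+1)$-set, the union bound over the $\binom{n}{k+1}\approx (en/(k+1))^{k+1}$ sets forces $(s-1)\delta>1$, i.e.\ $\delta>\tfrac12$ for triangles ($s=3$) but only $\delta>\tfrac13$ for $4$-sets. On the other hand the colour count of a $(k+1)$-set is $\binom{k+1}{2}-2(\text{number of gadgets inside})$, and to force it above $m$ you need fewer than $\tfrac12\bigl(\binom{k+1}{2}-m\bigr)=\tfrac{k+r}{2}$ gadgets, i.e.\ $\delta<\tfrac12$ essentially (the worst case $r=1$ is allowed by the hypothesis). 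For triangles these two requirements collide exactly at $\tfrac12$, so your ``spread packing'' cannot be delivered by the first-moment argument, and it is unclear it exists at all; your friendship configuration only covers the case where no parity adjustment of $n$ is needed. The paper's gadget resolves this: each chosen $4$-set $\{a,b,c,d\}$ carries two colours, one on the matching $ab,cd$ and one on $ac,bd$, so the deficiency is still $0$ or $2$ (parity preserved) but a deficiency occurs only when all four vertices are present, giving containment probability $(l/n)^4$ and leaving the window $\tfrac13<\delta<\tfrac12$ (the paper takes $\delta=2/5$). Without either proving your triangle packing claim or switching to such a $4$-vertex gadget, the proof does not go through.
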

These two results will imply our main theorem of this paper.
\begin{thm}\label{mainthm}
    For large enough $m$ if $c>m$ then $P(c,m)$ is false.
\end{thm}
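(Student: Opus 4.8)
The plan is to derive Theorem~\ref{mainthm} from Theorems~\ref{lowc} and~\ref{largec} together with the reduction to finite graphs recorded above: if there is a finite graph with exactly $c$ colors and no exactly $m$-colored induced subgraph, then $P(c+1,m+1)$ and $P(c+2,m+2)$ are both false. Consequently, to prove $P(c,m)$ false it suffices to produce such a graph for the parameter pair $(c-1,m-1)$ \emph{or} for the pair $(c-2,m-2)$. So fix a large $m$ and any $c>m$; since $c>m$ we have $c-1\ge m-1$ and $c-2\ge m-2$, so both shifted pairs are admissible, and I will show that at least one of them is covered by Theorem~\ref{lowc} or Theorem~\ref{largec}.

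If $c-1<\frac{1}{2}(m-1)(\log(m-1))^{1/4}$, then Theorem~\ref{lowc} applied to $(c-1,m-1)$ produces the desired graph and we are done. Otherwise $c-1\ge\frac{1}{2}(m-1)(\log(m-1))^{1/4}$. Because for large $t$ the function $t\mapsto\frac{1}{2}t(\log t)^{1/4}$ increases by more than $1$ when $t$ increases by $1$, this forces $c-2\ge\frac{1}{2}(m-2)(\log(m-2))^{1/4}$ as well, so both $(c-1,m-1)$ and $(c-2,m-2)$ fall into the regime of Theorem~\ref{largec}. Thus it remains to verify the purely arithmetic claim that at least one of $m-1,m-2$ is a value $m'$ for which some positive integer $k$ makes $\binom{k}{2}-m'$ a non-negative odd number less than $k-1$; applying Theorem~\ref{largec} to the corresponding pair then finishes the argument.

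This arithmetic claim is the core of the proof and the place where I expect the real difficulty. Writing $n$ in the gap $\binom{k-1}{2}<n<\binom{k}{2}$, one checks that $k$ is the only index that could satisfy the size constraint, so $n$ is usable precisely when $\binom{k}{2}-n$ is odd, i.e. $n\equiv\binom{k}{2}-1\pmod 2$, whereas a triangular number $\binom{k}{2}$ is never usable. Since $m-1$ and $m-2$ are consecutive, in the typical situation they lie in a common gap, exactly one of them has the parity $\binom{k}{2}-1$, and that one works; moreover if $m-1=\binom{k}{2}$ then $m-2=\binom{k}{2}-1$ automatically has the correct parity. The delicate residues are those with $m-2=\binom{k}{2}$ and $k$ odd — equivalently $m=\binom{k}{2}+2$ with $k$ odd — where $m-1=\binom{k}{2}+1$ fails the parity test and neither shifted value is usable; I expect these to need a separate treatment, for instance a tailored finite-graph construction for such $m$ or a refinement of Theorem~\ref{largec} that also forbids color counts one or two above a triangular number. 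Once that residual family is absorbed, every large $m$ and every $c>m$ is handled by one of the two theorems after an appropriate shift, and therefore $P(c,m)$ is false, establishing Theorem~\ref{mainthm}.
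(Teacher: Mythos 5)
Your overall skeleton is the same as the paper's: use the reduction to finite graphs with a shift by $1$ or $2$, handle small $c$ by Theorem~\ref{lowc}, and for large $c$ choose the shift so that the parity hypothesis of Theorem~\ref{largec} holds. Your parity analysis is correct, and you have correctly isolated the obstruction: writing the exceptional values as $m=\binom{k}{2}+2$ with $k$ odd (equivalently, in the paper's parametrization $m=\binom{k'}{2}+1-q$ with $q=k'-2$ even), neither $m-1$ nor $m-2$ satisfies the hypothesis of Theorem~\ref{largec}. Your monotonicity remark, that $c-1\ge \tfrac12 (m-1)(\log(m-1))^{1/4}$ forces $c-2\ge\tfrac12 (m-2)(\log(m-2))^{1/4}$ for large $m$, is also fine.

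However, the proposal stops exactly where the remaining work lies, and this is a genuine gap, not a loose end: the exceptional family is infinite, and for each such $m$ every $c$ at or above the threshold is left unhandled (only the sub-threshold $c$ are rescued by Theorem~\ref{lowc}), so ``I expect these to need a separate treatment'' leaves infinitely many pairs $(c,m)$ unproved and Theorem~\ref{mainthm} does not follow. The paper closes this case with an explicit construction (in the spirit of Stacey and Weidl): set $c'=c-1$, $m'=m-1$, take $n$ minimal with $\binom{n}{2}\ge c'$, and color $K_n$ so that for $1\le i\le \binom{n}{2}-c'$ the two adjacent edges $x_ix_{i+1}$ and $x_{i+1}x_{i+2}$ share color $i$ (indices cyclic), all other edges receiving distinct fresh colors; the remaining edges of the infinite graph get one new color. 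One then argues that an exactly $m'$-colored subset $S$ of $K_n$ would have to contain at least $k-2$ of these repeated-color configurations inside a $k$-subset, which forces their indices to be consecutive and to wrap around the cycle, and this pins down $n=k$ and $c=m=\binom{k}{2}-(k-2)$, contradicting $c>m$. Some construction and verification of this kind (or a strengthening of Theorem~\ref{largec} covering $m'$ equal to, or one above, a triangular number of the wrong parity) is indispensable; without it your argument covers all but the residues you named and therefore does not establish the theorem.
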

In section $2$ we will prove Theorem \ref{lowc}, in section 3 we will prove Theorem \ref{largec} and in the final section we will describe how these theorems give Theorem \ref{mainthm}. Throughout this paper we will use standard notation for graphs and their parameters. See Bollob\'{a}s \cite{10.5555/7228} for general background.

\section{Proof of Theorem 2}

The general strategy of the proof will be as follows. We will construct a complete bipartite graph $K_{a,b}$ with bipartition $(V,W)$ with $|V|=a,|W|=b$ and use a separate color for each edge. In addition we will add all edges between vertices of $W$ and color them in a relatively small number $d$ of colors so that in total we have $c$ colors and any large subset of $W$ contains all $d$ colors used within $W$. We will also have $a$ being quite small. The goal is to ensure that in order to have exactly $m$ colors we need to pick a large subset of $W$, hence requiring exactly $m-d$ colors used in $K_{a,b}$. In order to get a contradiction we will pick $d$ to be such that $m-d$ does not have any divisors less than $a$ that are larger than $\frac{50a}{\sqrt{\log m}}$. This will ensure that we cannot have exactly $m$ colors in total. 

We will first prove a few lemmas before moving on to the proof of Theorem \ref{lowc}. Throughout this section all products over $p$ are assumed to be taken over prime numbers $p$.
\begin{lem}\label{productabovetenth}
    For $s$ large enough we have that $$\prod_{\frac{s}{10}< p\le s}p>\left(\frac{s}{10}\right)^{\frac{0.5s}{\log s}}$$
\end{lem}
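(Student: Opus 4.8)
\emph{Proof plan.} The plan is to count the primes in the interval $\left(\tfrac{s}{10},s\right]$ and then bound the product from below by replacing every factor with the least possible value, namely $\tfrac{s}{10}$. Write $N=N(s)$ for the number of primes $p$ with $\tfrac{s}{10}<p\le s$; in the notation of the Prime Number Theorem this is $N=\pi(s)-\pi(s/10)$. For $s>10$ every such prime is an integer exceeding $\tfrac{s}{10}>1$, so the product of these $N$ factors satisfies $\prod_{s/10<p\le s}p>\left(\tfrac{s}{10}\right)^{N}$ as soon as $N\ge 1$. Since $x\mapsto\left(\tfrac{s}{10}\right)^{x}$ is strictly increasing for $s>10$, it therefore suffices to prove that $N>\tfrac{0.5s}{\log s}$ for all sufficiently large $s$.

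To estimate $N$ I would apply the Prime Number Theorem twice. It gives $\pi(s)=(1+o(1))\tfrac{s}{\log s}$ and $\pi(s/10)=(1+o(1))\tfrac{s/10}{\log(s/10)}$. Because $\log(s/10)=\log s-\log 10=(1+o(1))\log s$, the second quantity equals $(1+o(1))\tfrac{s}{10\log s}$. Subtracting,
$$N=\pi(s)-\pi(s/10)=(1+o(1))\left(\frac{s}{\log s}-\frac{s}{10\log s}\right)=(1+o(1))\,\frac{0.9\,s}{\log s}.$$
In particular $\tfrac{N}{s/\log s}\to 0.9$, so there is an $s_0$ such that $N>\tfrac{0.5s}{\log s}$ for all $s\ge s_0$. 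Combining this with the bound of the first paragraph gives $\prod_{s/10<p\le s}p>\left(\tfrac{s}{10}\right)^{N}>\left(\tfrac{s}{10}\right)^{0.5s/\log s}$, as required.

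I expect no genuinely hard step here. The only point that needs a little care is that the Prime Number Theorem must be invoked at the shifted argument $s/10$, and one must observe that $\log(s/10)$ and $\log s$ are asymptotically equal, so that the subtracted term is of order $\tfrac{0.1\,s}{\log s}$ rather than something larger; this is what leaves the constant $0.9>0.5$ with room to spare. Indeed any constant strictly smaller than $0.9$ would work in place of $0.5$, so the precise value in the statement is not delicate, and the rest of the argument is just monotonicity of an exponential together with the elementary bound that a prime lying above $\tfrac{s}{10}$ is itself larger than $\tfrac{s}{10}$.
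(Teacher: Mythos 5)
Your proposal is correct and follows essentially the same route as the paper: both apply the Prime Number Theorem at $s$ and at $s/10$ (noting $\log(s/10)\sim\log s$) to get $\pi(s)-\pi(s/10)>\tfrac{0.5s}{\log s}$ for large $s$, and then bound each of these primes below by $s/10$. The only cosmetic difference is that the paper fixes explicit constants ($0.9$ and $1.1$) in the two PNT estimates rather than writing $(1+o(1))$ factors.
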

\begin{proof}
  By the prime number theorem for $s$ large enough $$\pi\left(\frac{s}{10}\right)<1.1\frac{s}{10\log \left(\frac{s}{10}\right)}<1.1\frac{s}{\frac{10}{2}\log s}=0.22\frac{s}{\log s}.$$ Also by the prime number theorem we have that for large enough $s$, $\pi (s)>0.9\frac{s}{\log s}$ so for $s$ large enough $\pi(s)-\pi\left(\frac{s}{10}\right)>0.5\frac{s}{\log s}$ so $$\prod_{\frac{s}{10}< p\le s}p>\left(\frac{s}{10}\right)^{\frac{0.5s}{\log s}}$$ as required.
\end{proof}

\begin{lem}\label{ratiobound}
    For $s\in \mathbb{R}_{\ge 0}$ large enough we have that $$\frac{\prod\limits_{\frac{s}{10}<p\le s}p}{\prod\limits_{p\le \frac{s}{10}}p}>s^{40}$$
\end{lem}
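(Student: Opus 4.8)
The plan is to combine the lower bound on the numerator furnished by Lemma~\ref{productabovetenth} with a crude upper bound on the denominator coming from the Prime Number Theorem. For the numerator, Lemma~\ref{productabovetenth} already gives $\prod_{s/10 < p \le s} p > (s/10)^{0.5 s/\log s}$ once $s$ is large. For the denominator, I would use the trivial estimate $\prod_{p \le x} p \le x^{\pi(x)} = e^{\pi(x)\log x}$ together with the Prime Number Theorem in the form $\pi(x) < 1.1\, x/\log x$, valid for $x$ large; applied with $x = s/10$ this yields $\pi(s/10)\log(s/10) < 1.1\cdot (s/10) = 0.11 s$, hence $\prod_{p \le s/10} p < e^{0.11 s}$.

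Putting these together, for $s$ large we obtain
\[
\frac{\prod_{s/10 < p \le s} p}{\prod_{p \le s/10} p} > \frac{(s/10)^{0.5 s/\log s}}{e^{0.11 s}}.
\]
Taking logarithms, the logarithm of the right-hand side equals $\frac{0.5 s}{\log s}\log(s/10) - 0.11 s = 0.5 s - \frac{0.5 s \log 10}{\log s} - 0.11 s$, which is $0.39 s - o(s)$ and in particular exceeds $0.3 s$ for $s$ large. Since $40\log s = o(s)$, we get $0.3 s > 40\log s$ once $s$ is large, so the ratio exceeds $s^{40}$, as claimed.

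The only point requiring care — and it is bookkeeping rather than a genuine obstacle — is that each of the inequalities invoked ($\pi(x) < 1.1\, x/\log x$ for the denominator bound, the bound $\pi(x) > 0.9\, x/\log x$ already used inside Lemma~\ref{productabovetenth}, and the final comparison $0.3 s > 40\log s$) holds only past some threshold, so one simply takes $s$ beyond the largest of these finitely many thresholds; this is consistent with the asymptotic phrasing of the statement. No sharper form of the Prime Number Theorem or any explicit constant is needed: the exponent $0.5 s/\log s$ in the numerator, once raised to a power of size roughly $\log s$, produces growth linear in $s$, which overwhelms both the linear term $0.11 s$ contributed by the denominator and the merely logarithmic target $40\log s$ appearing after taking logarithms.
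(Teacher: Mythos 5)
Your proposal is correct and follows essentially the same route as the paper: both bound the numerator via Lemma~\ref{productabovetenth} and the denominator via the Prime Number Theorem estimate $\pi(x)<1.1x/\log x$, differing only in bookkeeping (you write the denominator bound as $e^{0.11s}$ and compare logarithms, while the paper keeps it in the form $(s/10)^{0.25s/\log s}$ and compares exponents directly). No gap.
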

\begin{proof}
    By the prime number theorem we have that  
    \[
    \prod_{p\le \frac{s}{10}}p<\left(\frac{s}{10}\right)^{\frac{1.1s/10}{\log (s/10})}<\left(\frac{s}{10}\right)^{\frac{1.1s}{10\cdot \frac{1}{2}\log s}}=\left(\frac{s}{10}\right)^{\frac{2.2s}{10\log s}}<\left(\frac{s}{10}\right)^{\frac{1}{4}\frac{s}{\log s}}
    \]
    From Lemma \ref{productabovetenth}
    $$\frac{\prod_{\frac{s}{10}<p\le s}p}{\prod_{p\le \frac{s}{10}}p}>\frac{\left(\frac{s}{10}\right)^{\frac{0.5s}{\log (s)}}}{\left(\frac{s}{10}\right)^{\frac{0.25s}{\log (s)}}}=\left(\frac{s}{10}\right)^{\frac{0.25s}{\log (s)}}>s^{\frac{0.125s}{\log s}}>s^{40}$$ for large enough $s$ as required.
\end{proof}

\begin{lem}\label{colorfulcliques}
    Let $k,l,s$ be positive integers with $l\ge 2,k\le l\le s$ and $e^2s<le^{\frac{l}{4k}}$ (where $e$ denotes Euler's number), and let $K_s$ be the complete graph on $s$ vertices. Then we can color the edges of $K_s$ using exactly $k$ colors such that every induced subgraph of $K_s$ containing $l$ vertices has at least one edge from every color.
\end{lem}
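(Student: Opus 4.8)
The plan is to use the probabilistic (first-moment) method. Color each edge of $K_s$ independently and uniformly at random with one of the $k$ colors, and call a coloring \emph{bad} if some set $S$ of $l$ vertices spans no edge of some color $i$. For a fixed pair $(S,i)$ the $\binom{l}{2}$ edges inside $S$ are colored independently, so the probability that none of them gets color $i$ is $\left(1-\tfrac1k\right)^{\binom{l}{2}}$. Taking a union bound over the $\binom{s}{l}$ choices of $S$ and the $k$ colors,
\[
\Pr[\text{bad}] \;\le\; k\binom{s}{l}\left(1-\tfrac1k\right)^{\binom{l}{2}} .
\]

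Next I would estimate the three factors. Using $\binom{s}{l}\le (es/l)^{l}$, the inequality $1-x\le e^{-x}$, and the identity $\binom{l}{2}/(lk)=(l-1)/(2k)$, one gets
\[
\left(1-\tfrac1k\right)^{\binom{l}{2}} \le e^{-\binom{l}{2}/k} = \left(e^{-(l-1)/(2k)}\right)^{l} \le \left(e^{-l/(4k)}\right)^{l},
\]
where the last step uses $(l-1)/(2k)\ge l/(4k)$, i.e.\ exactly the hypothesis $l\ge 2$. Substituting,
\[
\Pr[\text{bad}] \;\le\; k\left(\frac{es}{l}\,e^{-l/(4k)}\right)^{l} \;=\; k\left(\frac{e^{2}s}{l\,e^{l/(4k)}}\right)^{l}e^{-l}.
\]
By the hypothesis $e^{2}s< l\,e^{l/(4k)}$ the bracketed quantity is less than $1$, and combining this with $k\le l$ yields $\Pr[\text{bad}]< l\,e^{-l}\le 1/e<1$.

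Since the probability of being bad is strictly less than $1$, there exists a \emph{good} coloring, i.e.\ one in which every $l$-set of vertices spans all $k$ colors; fix one such coloring of $E(K_s)$. It remains only to note that it uses \emph{exactly} $k$ colors: because $s\ge l$ we may choose some $l$ vertices, and by goodness the edges among them already realize all $k$ colors, so none of the $k$ colors is missing (and trivially no further colors are used). There is no genuine obstacle here — the content is the inequality chain above — but the two arithmetic hypotheses each play a precise role: $l\ge 2$ is what lets us pass from $e^{-(l-1)/(2k)}$ to $e^{-l/(4k)}$, while $k\le l$ is what absorbs the leading factor $k$ to finish at $l e^{-l}<1$; the condition $e^{2}s< l\,e^{l/(4k)}$ is tuned exactly so the per-$l$-set term beats $1/e$.
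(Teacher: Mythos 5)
Your proof is correct and follows essentially the same route as the paper: a uniformly random $k$-coloring, a union bound over colors and $l$-sets, the bound $\binom{s}{l}\le (es/l)^l$, and the hypothesis $e^2s<le^{l/(4k)}$ to push the failure probability below $1$. The only differences are cosmetic --- you use $1-\tfrac1k\le e^{-1/k}$ and the hypothesis $l\ge 2$ where the paper manipulates $\bigl(1+\tfrac{1}{k-1}\bigr)^{(l-1)/2}$ and invokes $l\ge k$ (forcing it to treat $k=1$ separately, which your version avoids) --- and you additionally make explicit the easy point that the resulting coloring uses exactly $k$ colors.
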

\begin{proof}
    Note that the lemma is trivial when $k=1$ so we may assume that $k>1$. Suppose that the set of colors is $[k]$. Color the graph $K_s$ randomly by assigning a color from $[k]$ to each edge independently with equal probability for each color. Given a color $i$ and a subset $A\subset K_s$ of size $l$ the probability that no edge in $K_s[A]$  has color $i$ is $$p_i=\left(\frac{k-1}{k}\right)^{\binom{l}{2}}.$$ Therefore by the union bound the probability that all edges in $K_s[A]$ are colored in at most $k-1$ colors is at most $\sum_{i=1}^k p_i= k\left(\frac{k-1}{k}\right)^{\binom{l}{2}}$. There are $\binom{s}{l}$ subsets of $K_s$ of size $l$ so again by the union bound the probability that there is at least one subset $A\subset K_s$ of size $l$ such that the edges of $K_s[A]$ are colored in less than $k$ colors is at most 
    \begin{align}\label{l71}
        p=\binom{s}{l}k\left(\frac{k-1}{k}\right)^{\binom{l}{2}}.
    \end{align}
         Note that clearly $\binom{n}{t}\le n^t/t!$ for any integers $0\le t\le n$ where $0!=1$. We will first prove that $n!>n^n/e^n$. Indeed, this is true for $n=1$ and note that for any positive integer $n$ we have that $$\left(1+\frac{1}{n}\right)^{n}= \sum_{i=0}^n\frac{1}{n^i}\binom{n}{i}\le \sum_{i=0}^n\frac{1}{i!}<\sum_{i=0}^{\infty}\frac{1}{i!}=e.$$ Thus we have that $(n+1)^n/e<n^n$. So, if $n!>n^n/e^n$ we have that $$(n+1)!>\frac{(n+1)n^n}{e^n}>\frac{(n+1)^{n+1}}{e^{n+1}}.$$ This proves by induction that $n!>n^n/e^n$ for all positive integers $n$. Now by (\ref{l71}) we have that 
         \begin{align}\label{l72}
             p\le \frac{s^l}{l!}k\left(\frac{k-1}{k}\right)^{\frac{l(l-1)}{2}}<\left(\frac{es}{l}k^{1/l}\left(\frac{k-1}{k}\right)^{\frac{l-1}{2}}\right)^l
         \end{align}
         Note that $$\left(1+\frac{1}{k-1}\right)^{\frac{l-1}{2}}=\left(1+\frac{1}{k-1}\right)^{k-1\frac{l-1}{2(k-1)}}\ge 2^{\frac{l-1}{2(k-1)}}\ge e^{\frac{l}{4k}}$$ since $l\ge k$. We also have that $k^{1/l}\le e^{\frac{\log k}{k}}<e$ so by (\ref{l72}) and the condition $e^2s<le^{\frac{1}{4k}}$ we have that $$p< \left(\frac{e^2s}{le^{\frac{l}{4k}}}\right)^{l}<1.$$ Thus there exists a coloring satisfying the required condition. This proves the lemma.
\end{proof}

We now move on to the proof of Theorem \ref{lowc}.
\begin{proof}[Proof of Theorem \ref{lowc}.]
    For each positive integer $l$ define $s(l)$ to be $$s(l)=\prod_{\frac{\sqrt{l}}{10}<p\le \sqrt{l}}p^2\prod_{\sqrt{l}< p\le l}p.$$
    Take $t$ to be the smallest integer such that 
    \[
    s(t)>\frac{m(\log m)^{1/4}}{2}.
    \]Such a $t$ certainly exists (for example if $t$ is a very large prime it satisfies the above inequality). Note that $$s(t)<\left(\prod_{p\le t}p\right)^2\le t^{2\pi(t)}<t^{2t}$$
    We now show that $t> \frac{\log m}{2\log \log m}$ for large $m$. If this were not true we would have $t< \log m$ and $2t\le \frac{\log m}{\log \log m}$. Therefore $$t^{2t}<e^{\log \log m\frac{\log m}{\log \log m}}=m<s(t)$$ which is a contradiction. Thus we have that 
    \begin{align}\label{tlowerbound}
    t> \frac{\log m}{2\log \log m}>100\sqrt{\log m}
    \end{align}
    for $m$ large enough and hence $t\rightarrow \infty$ as $m\rightarrow \infty$.
    We also need an upper bound for $t$. Let us show that $t\le 10 \log m$. Take $t'$ to be any integer such that $9\log m<t'<10\log m$ (where $m>3$). For $m$ large enough $\frac{t'}{10}>\sqrt{t'}$ and also $\log t'<2\log \log m$. By Lemma \ref{productabovetenth} $$s(t')>\prod_{\frac{t'}{10}<p\le t'}p>\left(\frac{t'}{10}\right)^{\frac{0.5t'}{\log t'}}>t'^{\frac{0.25t'}{\log t'}}>(\log m)^{\frac{0.25\cdot 9\log m}{2\log \log m}}=m^{\frac{9}{8}}>\frac{m(\log m)^{1/4}}{2}.$$ Thus by definition of $t$ we must have that $t<10\log m$. Note that for any positive integer $l>3$, $\frac{s(l)}{s(l-1)}\le l^2$ since you can add at most one new prime to $s(l)$ and at most one prime increases its power from one to two in $s(l)$ compared to $s(l-1)$. Since $s(t-1)\le \frac{m(\log m)^{1/4}}{2}$ we have that \begin{align}
        s(t)<m(\log m)^3.
    \end{align}
   for large $m$. Note that $t^2>\log m$ by (\ref{tlowerbound}). By Lemma \ref{ratiobound} we obtain that for large $m$
   $$\frac{s(t)}{\prod\limits_{p\le t}p}=\frac{\prod\limits_{\frac{\sqrt{t}}{10}<p\le \sqrt{t}}p}{\prod\limits_{p\le \frac{\sqrt{t}}{10}}p}>t^{20}>\log ^{10}m.$$ Hence 
   $\prod\limits_{p\le t}p<\frac{s(t)}{\log^{10}m}<\frac{m}{\log^{7}m}$. 

   Since $s(t)>c$ there must be a prime power in the product $s(t)$ that does not divide $c-m$. Let that be $n=p_1^{\epsilon}$ where $\epsilon=1$ or $2$. We also have that $n\le t$ so there exists a  positive $r\le t$ such that $n|c-r$. Now, we have that $n\nmid m-r$. Since $\gcd (n,\prod\limits_{\substack{p<t\\ p\neq p_1}}p)=1$ there is a non-negative $x< \prod\limits_{\substack{p\le t\\ p\neq p_1}}p<\frac{m}{\log ^{7}m}$ such that $\prod\limits_{\substack{p\le t\\ p\neq p_1}}|m-r-nx-1$. Namely we have that $m-r-nx$ is not divisible by any prime lower than $t$ other than possibly $p_1$. Since $r,n\le t<10\log m$ we obtain $r+nx<10\log m+\frac{10m}{\log ^{6}m}<\frac{m}{\log ^{5}m}$ for large $m$. Let $a=n,b=\frac{c-r-nx}{n}$ and let the graph $G$ consist of the vertex set $V\cup W$ where $|V|=a,|W|=b$. Define the set $E(G)$ of edges of $G$ to consist of all pairs of vertices containing at least one vertex from $W$. We will color all edges of $G$ with a color in $[c]$. Color all edges containing one vertex from $V$ with distinct colors $1,2,\ldots ab$. We will use the remaining $r+nx$ colors to color edges with both vertices in $W$. Let $s=b,l=\left[\frac{m}{20\log m}\right]$ and $k=r+nx$. Note that $r+nx<\frac{m}{\log^5m}<\left[\frac{m}{20\log m}\right]$ and $c-r-nx>m/2$ so $b>\left[\frac{m}{20\log m}\right]$. Thus we have that $k<l<s$. We also trivially have $k\ge 1,l\ge 2$. Now we check the final condition for Lemma \ref{colorfulcliques}. Since $\frac{l}{4k}\ge \frac{m}{100\log m}\frac{\log^5 m}{m}>\log^3 m$ for large $m$ we have that $le^{\frac{l}{4k}}>m^{\log^2 m}$. But we also have that $e^2s<ce^2<m^2$ and hence $e^2s<le^{\frac{l}{4k}}$. Thus we can apply Lemma \ref{colorfulcliques} on the complete graph $G[W]$ with $s,l,k$ as above to obtain a coloring of the edges of $G[W]$ such that for any $A\subset W$ of size at least $\frac{m}{20\log m}$ the edges in $G[A]$ use all $r+nx$ colors. We use this coloring of $G[W]$ where the $k$ colors are precisely the remaining colors $ab+1,ab+2,\ldots ,c$. \\
   \\
   It remains to show that no induced subgraph of this graph is exactly $m$-colored. Suppose the contrary and let $A\subset G$ be such that the edges of $G[A]$ have exactly $m$ distinct colors Note that less than $m/2$ colours are used in $G[W]$ so $G[A]$ must use at least $m/2$ of the first $c$ colors. Now let $|A\cap V|=v,|A\cap W|=w$. We have that $vw\ge m/2$. Thus $w\ge \frac{m}{2v}\ge \frac{m}{2a}\ge \frac{m}{20\log m}$. Hence by our coloring of $W$ all of the last $r+nx$ colors are used within $A$ and thus $vw=m-r-nx$. Note that since $m-r-nx$ has no prime divisors less than $t$ other than possibly $p_1$ and since $n\nmid m-r$ and $v\le t$ we must have that $v<n$ and $n/v$ is either $p_1$ or $p_1^2$. From (\ref{tlowerbound}) we obtain $ p_1\ge \sqrt{t}/10> (\log m)^{1/4}$ and hence the number of colors used in $A$ among the first $ab$ colors is at most $\frac{ab}{p_1}< \frac{c}{(\log m)^{1/4}}< \frac{m}{2}$ giving a contradiction. This proves the theorem.

\end{proof}
\section{Proof of Theorem \ref{largec}}

In this section we prove Theorem \ref{largec}. The proof will be very similar to that of the proof of Theorem 3 in \cite{STACEY19991}. We will just slightly modify the proof of Lemma 4 in \cite{STACEY19991} weakening one condition but strengthening another. This will allow us to close the gap between low and large values of $c$ and cover all pairs $(c,m)$ for large enough $m$. Together with Theorem 3 in \cite{STACEY19991} this will leave only a finite (albeit large) number of remaining unsolved cases of Erickson's conjecture. We begin with a slight modification of Lemma 4 in \cite{STACEY19991}.

\begin{lem}\label{prob4sets}
    There is a constant $\alpha>1$ such that if $n,l$ are integers with $l\ge 100$ and $n\ge \alpha l$ there are $4$-sets $X_1,X_2,\ldots X_r\subset [n]$, where $r=\lceil3 n/2\rceil$, satisfying the following properties. For any $i\neq j$ we have $|X_i\cap X_j|\le 1$ and for any $V\in [n]^{(l)}$ we have that $$|\{i\in [r]\ |X_i\subset V\}|\le 2l/5$$
\end{lem}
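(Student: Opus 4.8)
The plan is a first-moment / deletion argument combined with a union bound, in the spirit of the proof of Lemma~\ref{colorfulcliques}. Fix integers $l\ge 100$ and $n\ge \alpha l$, where the constant $\alpha>1$ will be pinned down at the end; put $m=2r$ and pick $4$-subsets $Y_1,\dots,Y_m\subseteq[n]$ uniformly at random and independently (allowing repeats, so these are i.i.d.). I will show that with positive probability this family becomes a partial Steiner system after deleting only boundedly many sets, while already being suitably sparse inside every $l$-set; retaining $r$ of the survivors then finishes the proof.

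First, control the conflicting pairs, meaning pairs $\{i,j\}$ with $|Y_i\cap Y_j|\ge 2$ (these also absorb any coincidences $Y_i=Y_j$). Counting $4$-sets meeting a fixed $4$-set in at least two points, $\Pr[\,|Y_i\cap Y_j|\ge 2\,]=\bigl(6\binom{n-4}{2}+4(n-4)+1\bigr)/\binom{n}{4}=O(1/n^2)$ for $i\ne j$, so the expected number of conflicting pairs is $\binom{m}{2}\cdot O(1/n^2)=O(1)$ because $m=\Theta(n)$; call this bound $C_0$, an absolute constant. By Markov's inequality the number of conflicting pairs exceeds $3C_0$ with probability at most $1/3$.

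Next, fix $V\in[n]^{(l)}$ and let $Z_V=|\{i:Y_i\subseteq V\}|$, a sum of $m$ independent indicators each of probability $p=\binom{l}{4}/\binom{n}{4}$, so $\mathbb{E}[Z_V]=mp=O(l^4/n^3)$, which is comfortably below $2l/5$ once $\alpha$ is large since $n\ge\alpha l$. Using the Chernoff bound $\Pr[Z_V\ge t]\le (e\,\mathbb{E}[Z_V]/t)^t$ with $t=2l/5$ and then the union bound over the $\binom{n}{l}\le (en/l)^l$ choices of $V$,
$$\Pr\bigl[\exists\,V:\ Z_V\ge 2l/5\bigr]\ \le\ \Bigl(\frac{en}{l}\Bigr)^{l}\Bigl(\frac{c_1 l^3}{n^3}\Bigr)^{2l/5}\ =\ \bigl(c_2\sqrt{l/n}\,\bigr)^{2l/5}\ \le\ \bigl(c_2/\sqrt{\alpha}\,\bigr)^{2l/5}$$
for absolute constants $c_1,c_2$. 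This is the heart of the matter: the factor $(en/l)^l$ from the binomial coefficient must be swallowed by the super-exponential Chernoff factor, which happens precisely because $\mathbb{E}[Z_V]$ keeps its $l^4/n^3$ scaling and the powers of $n$ cancel down to a single power of $l/n\le 1/\alpha$. Taking $\alpha>4c_2^2$ makes the base less than $\tfrac12$, and as $l\ge 100$ gives exponent $2l/5\ge 40$, this probability is less than $1/3$.

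Fix $\alpha$ this large. Then with probability at least $1-\tfrac13-\tfrac13>0$ both bad events fail simultaneously; choose such an outcome. Deleting one endpoint from each edge of the conflict graph removes at most $3C_0$ of the sets, and the surviving family consists of distinct $4$-sets with pairwise intersections of size at most $1$ and still has $Z_V<2l/5$ for every $V\in[n]^{(l)}$, since deleting sets only decreases each $Z_V$. Because $n\ge\alpha l\ge 100\alpha$ is large, $m-3C_0=2r-3C_0\ge r$, so we may keep exactly $r$ of the survivors and name them $X_1,\dots,X_r$; they have all the required properties. The single genuine pitfall is the estimate above: one must not weaken $\mathbb{E}[Z_V]$ to the cruder $O(l/\alpha^3)$, as that loses the $n$-dependence and makes the union bound diverge when $n\to\infty$ with $l$ fixed. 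With the scaling kept intact everything else is routine, and no effort is made to optimize the constant $\alpha$.
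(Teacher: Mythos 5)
Your proof is correct and follows essentially the same probabilistic deletion argument as the paper: random 4-sets, a first-moment/Markov bound on pairs intersecting in at least two points, a union bound over all $l$-sets showing no $V$ contains too many of them, and then deleting one set from each bad pair. The only differences are cosmetic simplifications --- you sample a fixed number $2r$ of i.i.d.\ uniform 4-sets instead of including each 4-set independently with probability $p$ (so you avoid the Chebyshev step controlling $|\mathcal{A}|$), and you replace the paper's binomial tail estimate combined with Harris's inequality by a Chernoff bound plus a plain union bound.
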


\begin{proof}
    Let $n,l$ be integers where $l\ge 100$ and $n\ge \alpha l$, where $\alpha>1$ will be chosen later. Since the proof is similar to Lemma 4 in \cite{STACEY19991} we will skip over some of the details that are obtained in an identical fashion. We will refer to \cite{STACEY19991} in  such cases. Choose a random collection of $4$-sets that are subsets of $[n]$ where each set is chosen independently with probability $p$ where $$p=\frac{60}{(n-1)(n-2)(n-3)}.$$ Let $\mathcal{A}$ be this random collection. As in \cite{STACEY19991} since the mean and variance of $|\mathcal{A}|$ are $5n/2$ and $5n(1-p)/2$ we obtain by Chebyshev's inequality that 
    \begin{align}\label{prob1}
        \mathbb{P}\left(\Big| |\mathcal{A}|-5n/2\Big|\ge n/2\right)\le \frac{5n(1-p)2}{n^2}<1/6.
    \end{align}
    Now, define 
    $$\mathcal{I}=\{(A,B)\ |\ A,B\in \mathcal{A},|A\cap B|\ge 2\}.$$
    Just like in \cite{STACEY19991} we obtain that
\[
\mathbb{E}(|\mathcal{I}|)=\frac{n(n-1)(n-2)^2(n-3)^2}{16}p^2=\frac{225n}{n-1}<250
\] 
so by Markov's inequality we have that 
    \begin{align}\label{prob2}
        \mathbb{P}(|\mathcal{I}|\ge n/2)\le \frac{500}{n}<1/6
    \end{align}
    when $\alpha>30$.
    Suppose that $X\in [n]^{(l)}$. Again, similar to \cite{STACEY19991} we have 
    $$\mathbb{P}(|X^{(4)}\cap \mathcal{A}|\ge \lceil 2l/5\rceil )\le \binom{\binom{l}{4}}{\lceil2l/5\rceil}\left(\frac{60}{(n-1)(n-2)(n-3)}\right)^{\lceil2l/5\rceil}\le \frac{\beta}{(n-3)^{6l/5}}$$ where $\beta =\bigbinom{\binom{l}{4}}{\lceil2l/5\rceil}60^{\lceil2l/5\rceil}$.
    Using the bound $\binom{a}{b}\le \frac{a^b}{b!}\le \frac{(ea)^b}{b^b}$ we get that 
\begin{align}\label{betaineq}
\beta\le \left(\frac{el^4}{5l}\right)^{2l/5+1}60^{2l/5+1}\le (5l)^{6l/5}60l^3<(n-3)^{6l5}
\end{align}
where we use that $l^4>5l<24\lceil2l/5\rceil$ and $\alpha >15$.
Now by Harris's inequality ({}\cite{Harris_1960}; see also \cite{Liggett}) we obtain that 
    \begin{align}\label{betabound}
        \mathbb{P}(\forall X\in [n]^{(l)},\ \  |X^{(4)}\cap\mathcal{A}|< \lceil2l/5\rceil)\ge \left(1-\frac{\beta}{(n-3)^{6l/5}}\right)^{\binom{n}{l}}\ge 1- \frac{2^{6l/5}\beta}{n^{6l/5}}\binom{n}{l}.
    \end{align}
From the first inequality in (\ref{betaineq})
\[
\frac{2^{6l/5}\beta}{n^{6l/5}}\binom{n}{l}\le \frac{(8el^3\cdot 12)^{2l/5}(en)^l96el^3}{n^{6l/5}l^l}=(96^2e^7)^{l/5}\left(\frac{l}{n}\right)^{l/5}96el^3<1/2
\] 
if we choose $\alpha=10^{10}$. Now using (\ref{prob1}),(\ref{prob2}) and (\ref{betabound}) we obtain that there is a family $\mathcal{A}$ such that $|\mathcal{I}|<n/2,|\mathcal{A}|\ge 2n$ and for any $X\in [n]^{(l)}$ we have that $|X^{(4)}\cap \mathcal{A}|<2l/5$. Finally, if we remove a set from each of the pairs in $\mathcal{I}$ (and potentially even more sets to have exactly $r=\lceil3n/2\rceil$ remaining) we obtain the desired family.
    \end{proof}
We are now ready to prove Theorem \ref{largec}. Again the proof will be very similar to \cite{STACEY19991}.\\

\textit{Proof of Theorem \ref{largec}.} Suppose that $m>e^{10^{83}}$. Write $c,m$ in the form $c=\binom{r'}{2}-p',m=\binom{k}{2}-q$ where $0\le p'<r'-1,0\le q<k-1$. Now, if $p'$ is even set $r_1=r',p=p'$ and otherwise set $r_1=r'+2,p=p'+2r'+1$. By the assumption of the theorem $q$ is odd and it is also clear that $p$ is even. Note that $k\le 2\sqrt{m}-1$ and $r_1\ge \sqrt{m}(\log m)^{1/8}$. Applying Lemma \ref{prob4sets} for $l=k+1$ with $n=r_1$ we obtain a collection $X_1,X_2,\ldots X_r\subset [n]$ of $4$-sets, where $r=\lceil3n/2\rceil$, such that $|X_i\cap X_j|\le 1$ for any $i\neq j$ and no set of size $k+1$ contains more than $2(k+1)/5$ of those $4$-sets. \\

Let $G=K_n$ be the complete graph whose vertex set is $[n]$. We will color the edges of $G$ in $c$ colors as follows. For $1\le i\le p/2$, if $X_i=\{a_i,b_i,c_i,d_i\}$ use color $2i-1$ for edges $a_ib_i,c_id_i$ and use color $2i$ for edges $a_ic_i,b_id_i$. This is possible since $p\le 3r'\le 3n$. Color all the other edges with the remaining unused colors so that each of those colors is used exactly once. We now claim that no induced subgraph is exactly $m$-colored. Suppose the contrary and let $X\subset [n]$ such that $G[X]$ is exactly $m$-colored.\\

Clearly $|X|\ge k$. Notice that $G[X]$ has exactly $\binom{|X|}{2}-s$ colors, where $s$ is the number of colors $1,2,\ldots ,p$ that have two edges within $X$. We also have that for each $1\le i\le p/2$ color $2i$ appears twice if and only if color $2i-1$ appears twice which is if and only if $X_i\subset X$. This implies that $s$ must be even. Now we get $|X|>k$ as otherwise $s=q$ which is odd. Take any $X'\subset X$ of size $k+1$. By construction from Lemma \ref{prob4sets} we have that the number of $4$-sets contained within $X'$ is at most $2(k+1)/5$. Hence $G[X']$ has at least $\binom{k+1}{2}-4(k+1)/5>\binom{k}{2}\ge m$ colors since $k>4$. This is a contradiction, which completes the proof. \QED

\section{Proof of Theorem 1 and Conclusion}

We now use theorems \ref{lowc} and \ref{largec} to derive Theorem \ref{mainthm}. Again, the proof is analogous to the proof in \cite{STACEY19991} but we will include it for completeness. Suppose that $m$ is large and $c>m$. Now let $k,q$ be non-negative integers such that $m=\binom{k}{2}+1-q$ and $q\le k-2$. Let $X$ be the complete graph on the vertex set $\mathbb{N}$.\\

If $q$ is odd then let $c'=c-1,m'=m-1$. Now apply theorem \ref{lowc} or \ref{largec} to $(c',m')$ depending on whether $c'$ is below or above $\frac{m'(\log m')^{1/4}}{2}$. When doing this the graph we color will be some finite subgraph of $X$. Now color the rest of the edges of $X$ with a new color. It is easy to see that this graph satisfies the required property.\\

If $q$ is even and $q<k-2$ then let $c'=c-2,m'=m-2$. Again, similar to the previous case, we may apply theorem \ref{lowc} or \ref{largec} to $(c',m')$ to obtain a coloring of some finite subgraph $G\subset X$ with colors in $[c']$. Now we color all edges with exactly one vertex in $G$ with color $c-1$, and color every remaining uncolored edge with color $c$. Again, it is easy to see that this coloring gives the desired property.\\

The only remaining case is when $q$ is even and $q=k-2$. Stacey and Weidl in \cite{STACEY19991} noted that they have an example for this case without stating it explicitly. We will include one possible example here. Let $c'=c-1,m'=m-1$ and let $n$ be the smallest integer such that $\binom{n}{2}\ge c'$. We will color the complete $K_n$ on some vertex set $\{x_1,x_2,\ldots ,x_n\}\subset X$ with exactly $c'$ colors as follows. For each $1\le i\le n-2$ let $X_i=\{x_i,x_{i+1},x_{i+2},x_{i+3}\}$ and let $Y_i=\{x_ix_{i+1},x_{i+1}x_{i+2}\}$, where $x_{n+1}=x_1$. Now for each $1\le i\le \binom{n}{2}-c'$ color the two edges in $Y_i$ with color $i$. Color the rest of the edges in $K_n$ with the remaining unused colors such that each of these edges gets a different color. This gives a coloring of $K_n$ with exactly $c'$ colors. Now color all the remaining edges of $X$ with color $c$.\\

We claim that no infinite subgraph of $X$ is exactly $m$-colored. Suppose the contrary. It is clear that there must be some $S\subset K_n$ which is exactly $m'$-colored. Note that $|S|\ge k$ so consider any $S'\subset S$ of size $k$. The number of colors used in $S'$ is equal to $\binom{k}{2}-r$ where $r$ is the number of sets $X_i$ with $1\le i\le \binom{n}{2}-c'$ contained within $S$. Since $q=k-2$ we must have that $r\ge k-2$. Let $i_1<i_2<\ldots <i_r$ be such that $X_{i_j}\subset S'$ for each $1\le j\le r$. Now we have that $x_{i_1},x_{i_2},\ldots x_{i_r},x_{i_r+1},x_{i_r+2}$ are distinct elements of $S'$, but $r\ge k-2$ so these must be all elements of $S'$ and $r=k-2$. Since $x_{i_j+1}\in S'$ for any $j$ we must have that $i_1,i_2,\ldots i_r$ are consecutive integers. Also $x_{i_r+3}\in S'$ and hence $i_r+3=n+1,i_1=1$. This implies that  $n-2=r\le \binom{n}{2}-c'\le n-2$ so we have equality everywhere. Therefore, $n=k$ and $c=m=\binom{k}{2}-(k-2)$ which is a contradiction. This completes the proof of Theorem \ref{mainthm}.\QED \\

Combining the result of Theorem \ref{mainthm} together with the results in \cite{STACEY19991} we can see that Erickson's conjecture has now been proven for all but finitely many cases. The bounds in this paper have not been optimized. Perhaps there is some hope that with careful optimization of the bounds and maybe a clever computer method there is a way to verify the conjecture for all pairs $c\ge m\ge 3$.

\bibliographystyle{plain}
\bibliography{main}

\end{document}